\documentclass[11pt]{amsart}
\usepackage{amsmath,amssymb,amsthm}
\usepackage[margin=1in]{geometry}

\usepackage{xcolor}
\usepackage[colorlinks=true,linkcolor=blue!55!black,citecolor=blue!55!black,urlcolor=blue!55!black]{hyperref}

\newtheorem{theorem}{Theorem}
\newtheorem{proposition}[theorem]{Proposition}
\newtheorem{lemma}[theorem]{Lemma}
\newtheorem{corollary}[theorem]{Corollary}
\newtheorem{definition}[theorem]{Definition}
\theoremstyle{remark}
\newtheorem{remark}[theorem]{Remark}

\title{The $a$-number of the Serre-type maximal curves of\\ Abd\'on--Bezerra--Quoos}
\author{}
\thanks{Email address: mshafighi@missouri.edu}
\subjclass[2020]{Primary 11G20; Secondary 14H05, 14G15, 14L15}
\keywords{$a$-number, Cartier operator, maximal curves, Artin--Schreier curves, $p$-torsion group scheme, Jacobian}
\date{}

\begin{document}
\maketitle
\vspace{0.5em}
\centerline{\itshape Mahyar Shafighi}
\vspace{6pt}
\centerline{\itshape Department of Mathematics, University of Missouri, Columbia, MO, USA}
\vspace{1em}

\begin{abstract}
For every prime power $q$, we determine the $a$-number of the Serre-type
maximal curves $X(q,n)$: $y^{q^2}-y=x^{(q^n+1)/(q+1)}$ over
$\mathbb{F}_{q^{2n}}$, $n\ge 3$ odd, introduced by Abd\'on, Bezerra and
Quoos as a generalization of Serre's curve $y^4+y=x^3$ over
$\mathbb{F}_{64}$. The
Weierstrass semigroup at the unique point at infinity gives an explicit
basis of holomorphic differentials, on which the Cartier operator acts by
monomials; this reduces the $a$-number to a lattice-point count, which we
evaluate in closed form.
\end{abstract}

\section{Introduction}\label{sec:intro}

The $a$-number is an invariant of the $p$-torsion group scheme of the
Jacobian of a curve in characteristic $p$ that complements the $p$-rank and measures how
degenerate that group scheme is: it is the largest integer $a$ such that $\alpha_p^{\,a}$ is a subgroup scheme of it,
equivalently the dimension of the kernel of the Cartier operator on the
space of holomorphic differentials. While the $p$-rank of
a curve can often be computed via a Deuring--Shafarevich formula for
$p$-group covers (see Crew~\cite[Corollary~1.8]{Crew}, and Subrao~\cite{Subrao}
for Artin--Schreier covers), no analogous general formula is known for the $a$-number, which has
been computed only in particular cases, for example for Fermat and Hurwitz
curves~\cite{MS}, Suzuki curves~\cite{FGMPW}, and hyperelliptic curves in
characteristic $2$~\cite{EP}. Each new family for which the $a$-number can be computed exactly is
therefore of independent interest. Throughout, $k$ denotes an algebraic
closure of $\mathbb{F}_p$, and the $a$-number of a curve defined over a
finite field is that of its base change to $k$.

The Serre-type maximal curves $X(q,n)$ studied in this paper, given by
\[
y^{q^2}-y=x^{(q^n+1)/(q+1)}
\]
over $\mathbb{F}_{q^{2n}}$ for $n\ge 3$ odd, were introduced by Abd\'on,
Bezerra and Quoos~\cite{ABQ} as a generalization of Serre's original curve
$y^4+y=x^3$ over $\mathbb{F}_{64}$ (the case $q=2,n=3$). They showed that
$X(q,n)$ is $\mathbb{F}_{q^{2n}}$-maximal, attaining the Hasse--Weil upper
bound on its number of rational points, and determined its genus
$g=(q-1)(q^n-q)/2$. Their work concerns the curve's point count and genus; its
$a$-number, a basic invariant of the $p$-torsion of its Jacobian, had not,
to our knowledge, previously been determined.

Studying the $a$-number of $X(q,n)$ is worthwhile for a concrete reason.
Since $X(q,n)$ is maximal, a theorem of Garcia and
Tafazolian shows that its Cartier operator is
nilpotent (indeed $C^{tn}=0$ for $q=p^t$; see
\cite[Theorem~3.3]{GarciaTafazolian}); in particular its $p$-rank,
which equals the rank of a
sufficiently high power of the Cartier operator, is zero for every member
of the family. The $p$-rank therefore carries no information
here, and the $a$-number becomes the natural finer invariant
left to distinguish one maximal curve from another. Our main result
determines the $a$-number of $X(q,n)$ for every prime power $q$ and every odd
$n\ge 3$.

\begin{theorem}\label{thm:main}
Let $q=p^t$ be a power of a prime $p$, let $n\ge 3$ be odd, and let
$X=X(q,n)$. Then
\[
a(X)=\frac{p-1}{12p(q+1)}\Big[(3q^2+p-2)\,q^n+q\big((p-2)q^2-(p+1)q-3p\big)\Big].
\]
\end{theorem}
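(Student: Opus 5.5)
Set $m=(q^n+1)/(q+1)$; since $n$ is odd, $m$ is an integer prime to $p$. The curve $y^{q^2}-y=x^m$ has a single point $P_\infty$ at infinity, and $x,y$ have pole orders $q^2$ and $m$ there. The first step is the standard basis of holomorphic differentials for Artin--Schreier curves of this shape: $dx$ has divisor $(2g-2)P_\infty$ with $2g-2=(q^2-1)(m-1)-2$, so
\[
\omega_{i,j}=x^i y^j\,dx,\qquad i,j\ge 0,\quad iq^2+jm\le (q^2-1)(m-1)-2,
\]
form a basis of $H^0(X,\Omega^1)$. Counting these lattice points in the triangle recovers $g=(q-1)(q^n-q)/2$, which serves as a sanity check. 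Since $\gcd(q^2,m)=1$, distinct pairs give distinct pole orders, so these differentials are linearly independent.

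Next I compute the Cartier operator on this basis. Using $y=y^{q^2}-x^m$, I write
\[
x^iy^j\,dx=x^i\bigl(y^{q^2}-x^m\bigr)^j\,dx=\sum_{l=0}^{j}\binom{j}{l}(-1)^{j-l}\,y^{q^2 l}\,x^{i+m(j-l)}\,dx .
\]
Because $C(f^p\eta)=fC(\eta)$ and $C(x^{s}dx)$ equals $x^{(s+1)/p-1}dx$ when $p\mid s+1$ and $0$ otherwise,
\[
C(\omega_{i,j})=\sum_{l}\binom{j}{l}(-1)^{j-l}\,y^{q^2l/p}\,x^{(i+m(j-l)+1)/p-1}\,dx ,
\]
where only the $l$ with $i+m(j-l)\equiv -1 \pmod p$ survive and $\binom{j}{l}\neq 0$ in $\mathbb{F}_p$ by Lucas. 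I then re-express the surviving terms in the basis, reducing $y^{q^2 l/p}$ again when needed. The aim is to show that each basis element either lies in the kernel or maps to a nonzero multiple of a basis element (plus terms of strictly smaller pole order), with distinct non-kernel elements having distinct leading terms. Then the images of the non-kernel basis elements are linearly independent, and $a(X)$ is the number of $(i,j)$ with $C(\omega_{i,j})=0$. The cleanest way to see this is to track the leading term, meaning the one of largest pole order: by the valuation formula $v_{P_\infty}(C\omega)\ge \lceil(v(\omega)+1)/p\rceil-1$, the leading pole order is divided by roughly $p$, which gives injectivity on leading terms. Because $\gcd(m,p)=1$, the congruence condition on $l$ has an explicit solution, and the leading surviving term is the one with the largest admissible $l$. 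Showing that all surviving terms cancel exactly turns out to be equivalent to the simpler condition that no $l\le j$ with $l\equiv j-(i+1)m^{-1}\pmod p$ has $\binom{j}{l}\ne0$. By Lucas, this reduces to a digit condition on $j$ in base $p$, namely on the last digit $j_0$ of $j$. I expect the kernel to be characterized by $(i+1+m j)\bmod p$ lying in a certain range relative to $j_0$.

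The final step, which I expect to be the main obstacle, is to count these lattice points $(i,j)$ in the triangle $iq^2+jm\le (q^2-1)(m-1)-2$ subject to the congruence and digit conditions. My plan is to fix $j$, so that $i$ ranges over $0\le i\le \lfloor((q^2-1)(m-1)-2-jm)/q^2\rfloor$. On each such interval, the number of admissible $i$ is roughly $\tfrac{p-1}{p}$ times the interval length, plus a boundary correction depending on residues mod $p$. I then sum over $0\le j\le q^2-2$ (the range of $j$), using $m\equiv 1\pmod p$, since $q^n+1\equiv 1$ and $q+1\equiv 1$. The floor function $\lfloor (q^2-1-j)m/q^2\rfloor$-type expressions are handled with $m\equiv (1-q)^{-1}$-style identities modulo $q^2$; in fact $m\cdot(q+1)\equiv 1 \pmod{q^2}$ because $n\ge3$, so $jm\bmod q^2$ is computed explicitly via $m\equiv 1-q\pmod{q^2}$. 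Splitting $j$ by its residue mod $q$ and digit $j_0$ should give sums of arithmetic progressions. Collecting them should produce the stated closed form, with the factor $(p-1)/p$ coming from the density and the $1/12$ from sums of squares. I would check the result against $q=2,n=3$, where the formula gives $a=1$ with $g=3$, and against $p=q$ small cases to fix the boundary terms.
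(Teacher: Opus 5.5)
Your framework is sound: the basis $x^iy^j\,dx$, the expansion through $y=y^{q^2}-x^m$, and a triangularity argument for the rank. But the step that actually determines the $a$-number is not carried out, and the mechanism you propose for it fails.

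Write $Q=q^2$ and $j=pj_1+j_0$. Before reduction, every surviving function $y^{Ql/p}x^{(i+m(j-l)+1)/p-1}$ has the \emph{same} pole order $\frac{Q}{p}(i+mj+1)-Q$ at $P_\infty$, because each $y^{Ql}x^{m(j-l)}$ has pole order $Qmj$. So there is no ``leading surviving term with the largest admissible $l$''. When $j_1\ge 1$ these top-order contributions cancel, and the true image has pole order smaller by $(Q-1)mj_1$.

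The bound $v(C\omega)\ge\lceil (v(\omega)+1)/p\rceil-1$ is only an inequality, and $v\mapsto\lceil (v+1)/p\rceil-1$ is $p$-to-one. It therefore gives neither non-vanishing of $C(\omega_{i,j})$ nor injectivity on leading terms. Your assertion that $C(\omega_{i,j})=0$ is ``equivalent'' to the absence of surviving $l$ with $\binom{j}{l}\not\equiv 0$ is exactly the no-total-cancellation statement that needs proof. Also, the congruence should read $l\equiv j+(i+1)m^{-1}\equiv i+j+1\pmod p$; you have the sign reversed.

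The missing idea is that the cancellation can be computed exactly. By Lucas, if $i_0+j_0\le p-2$ then every surviving $l$ has $\binom{j}{l}\equiv 0$. If $i_0+j_0\ge p-1$, the surviving $l\le j$ are exactly $l=\nu_0+pb$ with $0\le b\le j_1$ and $\nu_0=i_0+j_0-(p-1)$, and then $\binom{j}{l}\equiv\binom{j_0}{\nu_0}\binom{j_1}{b}$. Writing $y^{Ql/p}=y^{\nu_0Q/p}(y+x^m)^b$, the $b$-sum collapses:
\[
\sum_{b=0}^{j_1}\binom{j_1}{b}(-1)^b\Bigl(\frac{y+x^m}{x^m}\Bigr)^b=\Bigl(-\frac{y}{x^m}\Bigr)^{j_1}.
\]
So $C(x^iy^j\,dx)$ is a nonzero multiple of the single basis monomial $x^Sy^E\,dx$ with $E=\nu_0Q/p+j_1\le Q-1$. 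This pins the kernel down as exactly $i_0+j_0\le p-2$, which you only ``expect'' to be some range condition.

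Injectivity on the non-kernel elements still needs its own argument. $E$ determines $(\nu_0,j_1)$, and the preimage is then determined by $j_0$. Two different values of $j_0$ would change $Qi+jm$ by a nonzero multiple of $m(Q-1)>2g-2$, which is impossible inside $B$.

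The counting step is only a plan, and its one quantitative claim is wrong. For fixed $j$ the admissible $i$ have density $(p-1-j_0)/p$, not $(p-1)/p$; for $p=2$ and $j$ odd there are none. A uniform-density-plus-boundary heuristic will therefore not produce the formula. One workable route is to set $\ell=Q-1-j$. The condition then becomes $i\bmod p<\ell\bmod p$ with $0\le i<c_\ell=\lfloor \ell m/Q\rfloor$. Using $m=Q\mu+1-q$ gives $c_\ell=\ell\mu-\lceil \ell/(q+1)\rceil$, where $\mu=(q^{n-2}+1)/(q+1)\equiv 1\pmod p$. This isolates the $n$-dependence in $\mu$ and leaves two sums depending only on $q$, which must then actually be evaluated.
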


The proof has three ingredients. The Weierstrass semigroup $\langle
q^2,m\rangle$ at the unique point at infinity, where $m=(q^n+1)/(q+1)$,
gives an explicit basis of holomorphic differentials
(Section~\ref{sec:curves}). On this basis the Cartier operator acts by
monomials: writing $y=(y^{q^2/p})^p-x^m$ and applying Lucas' theorem, the
image of each basis element is either $0$ or a single basis monomial, and
distinct basis elements have distinct images (Section~\ref{sec:cartier}).
The $a$-number is therefore the number of basis monomials $x^iy^j\,dx$
with $(i\bmod p)+(j\bmod p)\le p-2$, and Section~\ref{sec:count} evaluates
this lattice-point count in closed form, proving Theorem~\ref{thm:main}.
Section~\ref{sec:consequences} records special cases and checks.

\section{Preliminaries}\label{sec:prelim}

Let $X$ be a smooth projective curve of genus $g$ over $k$, and write
$J=\operatorname{Jac}(X)$ for its Jacobian. The $a$-number of $X$ is
\[
a(X) = \dim_k \operatorname{Hom}(\alpha_p, J[p]),
\]
where $J[p]$ is the kernel of multiplication by $p$ on $J$, and $\alpha_p$
is the kernel of the Frobenius endomorphism of the additive group scheme
$\mathbb{G}_a=\operatorname{Spec}k[T]$, that is,
$\alpha_p=\operatorname{Spec}(k[T]/(T^p))$.

The $a$-number can also be computed from the Dieudonn\'e module of $J[p]$.
By classical Dieudonn\'e theory (see Demazure~\cite{Demazure}), the functor
$G\mapsto D(G)$ is an anti-equivalence from finite commutative group
schemes over $k$ killed by $p$ to finite-dimensional $k$-vector spaces
equipped with a Frobenius-semilinear map $F$ and a Verschiebung-semilinear
map $V$ satisfying $FV=VF=0$, and $D(\alpha_p)=k$ with $F=V=0$. Put
$M=D(J[p])$; it has dimension $2g$, and $F$ and $V$ on $M$ are induced by
the relative Frobenius $F:J\to J^{(p)}$ and the Verschiebung
$V:J^{(p)}\to J$, whose composite $V\circ F$ is multiplication by $p$.
Since $D$ is contravariant, a homomorphism $\alpha_p\to J[p]$ corresponds to
a linear map $M\to k$ vanishing on $FM+VM$, so
\[
a(X)=\dim_k M/(FM+VM).
\]
Moreover, $J[p]$ is the $p$-torsion of the $p$-divisible group of $J$, so
it is a truncated Barsotti--Tate group of level $1$, which means (see
Demazure~\cite{Demazure} or Moonen~\cite[Section~2.1]{Moonen}) that $FM=\operatorname{Ker}(V)$ and
$VM=\operatorname{Ker}(F)$. By rank--nullity,
$\dim\operatorname{Ker}(F)+\dim FM=2g$, hence
$\dim\operatorname{Ker}(F)+\dim\operatorname{Ker}(V)=2g$, and therefore
\[
\dim M/(FM+VM)=2g-\dim\big(\operatorname{Ker}(F)+\operatorname{Ker}(V)\big)
=\dim\big(\operatorname{Ker}(F)\cap\operatorname{Ker}(V)\big).
\]
Thus
\[
a(X) = \dim_k\big(\operatorname{Ker}(F)\cap\operatorname{Ker}(V)\big),
\]
where $F$ and $V$ act on $M$.

The $a$-number can be computed from the Cartier operator; we first recall
its definition.

The Cartier operator on differential forms in characteristic $p$ was
introduced by Cartier~\cite{Cartier}.

\begin{definition}\label{def:cartier}
Let $K$ be a field of characteristic $p>0$ with $[K:K^p]=p$, and let
$x\in K\setminus K^p$. Then $1,x,\dots,x^{p-1}$ is a basis of $K$ over
$K^p$, and $\Omega_{K/K^p}$ is the one-dimensional $K$-vector space spanned
by $dx$. Hence every $\omega\in\Omega_{K/K^p}$ can be written uniquely as
\[
\omega = (f_0^p+f_1^px+\dots+f_{p-1}^px^{p-1})\,dx,\qquad f_0,\dots,f_{p-1}\in K.
\]
The Cartier operator $C$ sends $\omega$ to
\[
C(\omega)=f_{p-1}\,dx.
\]
\end{definition}

\emph{Note: The definition does not depend on the choice of $x$.}

We use this for the function field $K=k(X)$ of the curve $X$, which
satisfies $[K:K^p]=p$ because $k$ is perfect. Since $k=k^p\subset K^p$, the
space $\Omega_{K/K^p}$ equals $\Omega_{K/k}$, the space of differentials of
$X$. Every
differential is then $h\,dx$, for any chosen $x\in K\setminus K^p$, with $h=\sum_{r=0}^{p-1}f_r^px^r$ for unique
$f_r\in K$, so that $C(h\,dx)=f_{p-1}\,dx$. The operator is
$p^{-1}$-linear, that is, $C(\lambda^p\omega)=\lambda C(\omega)$ for
$\lambda\in k$.

By a theorem of Oda~\cite[Theorem~5.10 and Corollaries~5.11, 5.12]{Oda}, there is a canonical isomorphism
identifying the Dieudonn\'e module $D(J[p])$ with $H^1_{\mathrm{dR}}(X)$,
compatible with the Hodge filtration
\[
0\to H^0(X,\Omega^1)\to H^1_{\mathrm{dR}}(X)\to H^1(X,\mathcal{O}_X)\to 0.
\]
Under this identification $F$ vanishes identically on $H^0(X,\Omega^1)$ and
induces a Frobenius-semilinear map $H^1(X,\mathcal{O}_X)\to
H^1_{\mathrm{dR}}(X)$, while $V$ has image exactly $H^0(X,\Omega^1)$ and restricts there to the
Cartier operator $C$.

We can now relate this to the Cartier operator. Since $J[p]$ is a
truncated Barsotti--Tate group of level $1$, we have
$\operatorname{Im}(V)=\operatorname{Ker}(F)$ on $M$. Combined with $\operatorname{Im}(V)=H^0(X,\Omega^1)$ above, this forces
\[
\operatorname{Ker}(F) = H^0(X,\Omega^1).
\]
Hence
\[
\operatorname{Ker}(F)\cap\operatorname{Ker}(V)
= H^0(X,\Omega^1)\cap\operatorname{Ker}(V)
= \operatorname{Ker}\big(V|_{H^0(X,\Omega^1)}\big) = \operatorname{Ker}(C),
\]
where the last equality holds because the restriction of $V$ to
$H^0(X,\Omega^1)=\operatorname{Ker}(F)$ is the Cartier operator $C$. Therefore the $a$-number is given by the Cartier operator:
\[
a(X) = \dim_k\operatorname{Ker}(C) = g-\operatorname{rank}(C),
\]
which is the form we compute throughout the paper. In particular $C$ maps
$H^0(X,\Omega^1)$ into itself.

\section{The curves $X(q,n)$ and their holomorphic differentials}\label{sec:curves}

Let $p$ be a prime, $q=p^t$ a power of $p$, $n\ge 3$ an odd integer, and
put
\[
Q:=q^2,\qquad m:=\frac{q^n+1}{q+1}=\sum_{k=0}^{n-1}(-q)^k .
\]
Following Abd\'on--Bezerra--Quoos~\cite{ABQ}, let $X=X(q,n)$ be the
smooth projective curve over $k$ with affine plane model
\[
F(x,y)=y^{Q}-y-x^m=0,
\]
defined over $\mathbb{F}_{q^{2n}}$. When $q=2,n=3$ this is Serre's curve
$y^4+y=x^3$ over $\mathbb{F}_{64}$. The curve $X$ is
$\mathbb{F}_{q^{2n}}$-maximal of genus
\[
g=\frac{(q-1)(q^n-q)}{2}.
\]

\begin{lemma}\label{lem:m}
\begin{enumerate}
\item[(a)] $m\equiv 1\pmod q$; in particular $m\equiv 1\pmod p$ and
$\gcd(m,Q)=1$.
\item[(b)] $m=Q\mu+1-q$, where $\mu=(q^{n-2}+1)/(q+1)=\sum_{k=0}^{n-3}(-q)^k$
is an integer with $\mu\equiv 1\pmod p$.
\item[(c)] $g=(Q-1)(m-1)/2$, hence $2g-2=Qm-Q-m-1$.
\end{enumerate}
\end{lemma}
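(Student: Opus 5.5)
The plan is to work directly from the closed form $m=\sum_{k=0}^{n-1}(-q)^k$, together with the analogous expression for $\mu$, and to check all three parts by elementary algebra.

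\emph{Part (a).} Reduce the sum $m=\sum_{k=0}^{n-1}(-q)^k$ modulo $q$. Every term with $k\ge 1$ is divisible by $q$, so $m\equiv 1\pmod q$. Since $p\mid q$, it follows at once that $m\equiv 1\pmod p$. Consequently $p\nmid m$, and because $Q=q^2$ is a power of $p$, this gives $\gcd(m,Q)=1$.

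\emph{Part (b).} First, $n-2$ is odd, so $q+1$ divides $q^{n-2}+1$. Hence $\mu$ is an integer, and the geometric-series identity gives $\mu=\sum_{k=0}^{n-3}(-q)^k$. The same argument as in (a) yields $\mu\equiv 1\pmod q$, and therefore $\mu\equiv1\pmod p$. (For $n=3$ we simply have $\mu=1$.) For the identity, compute
\[
Q\mu+1-q=\frac{q^2(q^{n-2}+1)+(1-q)(q+1)}{q+1}=\frac{q^n+q^2+1-q^2}{q+1}=m.
\]

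\emph{Part (c).} Start from $m-1=(q^n-q)/(q+1)$. Then
\[
(Q-1)(m-1)/2=(q-1)(q+1)(q^n-q)/\big(2(q+1)\big)=(q-1)(q^n-q)/2=g .
\]
Finally, $2g-2=(Q-1)(m-1)-2=Qm-Q-m+1-2=Qm-Q-m-1$.

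None of these steps presents a real obstacle. The only point that needs care is the integrality of $\mu$, and this rests on $n-2$ being odd.
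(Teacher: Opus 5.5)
Your proof is correct and follows the paper's argument essentially step for step. The only difference is cosmetic: in (b) you check $Q\mu+1-q=m$ using the closed forms $(q^{n-2}+1)/(q+1)$ and $(q^n+1)/(q+1)$, whereas the paper splits off the $k=0,1$ terms of $\sum_{k=0}^{n-1}(-q)^k$ and factors $q^2$ out of the rest.
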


\begin{proof}
(a) Every term of $\sum_{k=0}^{n-1}(-q)^k$ with $k\ge 1$ is divisible by $q$.
(b) We have $m-(1-q)=\sum_{k=2}^{n-1}(-q)^k=q^2\sum_{k=0}^{n-3}(-q)^k$, and
since $n-2$ is odd, $\sum_{k=0}^{n-3}(-q)^k=(1-(-q)^{n-2})/(1+q)=(q^{n-2}+1)/(q+1)$.
As in (a), this sum is $\equiv 1\pmod q$. (c) $m-1=(q^n-q)/(q+1)$, so
$(Q-1)(m-1)/2=(q-1)(q^n-q)/2=g$; then
$2g-2=(Q-1)(m-1)-2=Qm-Q-m-1$.
\end{proof}

\begin{proposition}\label{prop:infty}
\begin{enumerate}
\item[(a)] The affine curve $F(x,y)=0$ is smooth, and $F_y=-1$.
\item[(b)] The function $x$ has a pole at exactly one point $P_\infty$ of $X$,
and $v_{P_\infty}(x)=-Q$, $v_{P_\infty}(y)=-m$.
\item[(c)] $X\setminus\{P_\infty\}$ is the affine curve $F=0$.
\end{enumerate}
\end{proposition}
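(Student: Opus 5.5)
For (a), I will compute the partial derivatives directly. In characteristic $p$ we have $F_y=Qy^{Q-1}-1=-1$, because $p\mid Q$. Since $F_y$ never vanishes, every affine point of $F=0$ is nonsingular. I also need $F$ to be absolutely irreducible. I will get this from the valuation computation in (b): a totally ramified place forces the extension $k(x,y)/k(x)$ to have degree $Q$. Alternatively, Eisenstein's criterion at the place $x=\infty$ works, since $\gcd(m,Q)=1$ by Lemma~\ref{lem:m}(a).

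For (b), I will work in the extension $K=k(x,y)$ of $k(x)$ and let $P$ be any place of $K$ lying over the pole $\infty$ of $x$ in $k(x)$. Let $e$ be its ramification index, so $v_P(x)=-e$. From $y^Q-y=x^m$ we get $v_P(y^Q-y)=-em<0$, so $y$ has a pole at $P$. Then $v_P(y^Q-y)=Qv_P(y)$, which gives $Qv_P(y)=-em$. Because $\gcd(m,Q)=1$, this forces $Q\mid e$. Since $e\le[K:k(x)]\le Q$, we get $e=Q$, the degree is exactly $Q$, and $F$ is irreducible. Total ramification then means $P$ is the only place over $\infty$; call it $P_\infty$. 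It has $v_{P_\infty}(x)=-Q$ and $v_{P_\infty}(y)=-m$.

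For (c), every place other than $P_\infty$ has $v(x)\ge0$. The relation $y^Q-y=x^m$ then gives $v(y)\ge0$ as well, since a pole of $y$ would produce a pole of $x^m$. Such a place therefore has a center on the affine curve $F=0$. Conversely, each point of the affine curve is smooth by (a), so it corresponds to exactly one place of $K$. Hence $X\setminus\{P_\infty\}$ is identified with the smooth affine model, which means the smooth projective curve $X$ is obtained by adding the single point $P_\infty$.

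The only subtle step is the ramification argument in (b), and it relies entirely on $\gcd(m,Q)=1$.
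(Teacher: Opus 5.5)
Your proposal is correct and follows the paper's proof essentially step for step. As in the paper, the relation $Qv_P(y)=-em$ together with $\gcd(m,Q)=1$ forces $Q\mid e$ while $e\le[K:k(x)]\le Q$, which gives irreducibility and total ramification at once, and your (a) and (c) match the paper as well. The one point to phrase carefully is that, before irreducibility is known, $K$ must mean $k(x)(y)$ for a root $y$ of $F$, or equivalently $k(x)[y]/(f)$ for an irreducible factor $f$ as the paper writes it; irreducibility over $k(x)$ then passes to $k[x,y]$ because $F$ is monic in $y$.
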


\begin{proof}
(a) Since $Q=q^2$ is divisible by $p=\operatorname{char}(k)$,
$F_y=Qy^{Q-1}-1=-1$ identically, so the gradient of $F$ never vanishes.

(b) Let $f$ be a monic irreducible factor of $F$ in $k(x)[y]$, of degree
$d$, and let $L=k(x)[y]/(f)$, in which we again denote the class of $y$ by
$y$. Let $P$ be a place of $L$ above $x=\infty$ and $e:=-v_P(x)>0$ its
ramification index; then $e\le[L:k(x)]=d$. Since $v_P(x^m)=-em<0$, the
equation $y^Q-y=x^m$ forces $v_P(y)<0$ and $Qv_P(y)=-em$. Hence $Q\mid em$,
so $Q\mid e$ because $\gcd(m,Q)=1$ (Lemma~\ref{lem:m}(a)). Thus $d\ge e\ge Q$,
so $f=F$: the polynomial $F$ is irreducible over $k(x)$, hence (being monic in
$y$) over $k$, and $[k(X):k(x)]=Q$. Now apply the same argument to
$L=k(X)$: every place $P$ above $\infty$ has $e=Q=[k(X):k(x)]$, so there is
only one such place, and $v_P(y)=-m$.

(c) By (b), every place of $k(X)$ other than $P_\infty$ has $v(x)\ge 0$, hence
$v(y)\ge 0$ because $y$ is integral over $k[x]$. Such places correspond to
the points of the affine curve $F=0$, which is smooth by (a).
\end{proof}

\begin{proposition}\label{prop:basis}
Let $B=\{(i,j)\in\mathbb{Z}^2 : i\ge 0,\ 0\le j\le Q-1,\ Qi+jm\le 2g-2\}$.
Then $\operatorname{div}(dx)=(2g-2)P_\infty$, and
\[
\{\,x^iy^j\,dx : (i,j)\in B\,\}
\]
is a basis of $H^0(X,\Omega^1)$. In particular $|B|=g$.
\end{proposition}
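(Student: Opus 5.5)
First I will compute $\operatorname{div}(dx)$. On the affine part, Proposition~\ref{prop:infty}(a) gives $F_y=-1$. Differentiating $F(x,y)=0$ yields $F_x\,dx+F_y\,dy=0$, so $dy=F_x\,dx$. At any affine point $P$ some local parameter is $x-x(P)$ or $y-y(P)$. Since $dy$ is a regular multiple of $dx$, the form $dx$ generates the cotangent space at $P$; equivalently, because $F_y\neq 0$, the function $x-x(P)$ is a local parameter. Hence $dx$ has neither zeros nor poles on $X\setminus\{P_\infty\}$, using Proposition~\ref{prop:infty}(c). A nonzero differential has degree $2g-2$, so $\operatorname{div}(dx)=(2g-2)P_\infty$.

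Next, holomorphy of the proposed forms. For $(i,j)\in B$, Proposition~\ref{prop:infty}(b) gives
\[
v_{P_\infty}(x^iy^j\,dx)=-Qi-jm+2g-2\ge 0,
\]
where the inequality is exactly the condition defining $B$. On the affine part $x,y$ are regular and $dx$ has no poles, so $x^iy^j\,dx\in H^0(X,\Omega^1)$.

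For linear independence, the pole orders $Qi+jm$ with $0\le j\le Q-1$ are pairwise distinct. Indeed, $Qi+jm=Qi'+j'm$ forces $Q\mid (j-j')m$. Since $\gcd(m,Q)=1$ by Lemma~\ref{lem:m}(a), this gives $Q\mid j-j'$, so $j=j'$ and then $i=i'$. Functions with distinct valuations at $P_\infty$ are linearly independent, and therefore so are the forms $x^iy^j\,dx$.

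It remains to show $|B|=g$; this count is the main step. The values $Qi+jm$ with $(i,j)$ ranging over $i\ge0$, $0\le j\le Q-1$ are exactly the elements of the numerical semigroup $\langle Q,m\rangle$, each represented uniquely. This semigroup has $(Q-1)(m-1)/2=g$ gaps (Lemma~\ref{lem:m}(c)) and conductor $(Q-1)(m-1)=2g$, so its Frobenius number is $2g-1$. By the symmetry of two-generator semigroups, $s\mapsto 2g-1-s$ is a bijection between elements and gaps in $[0,2g-1]$. The interval $[0,2g-1]$ has $2g$ integers and contains all $g$ gaps, so it contains exactly $g$ semigroup elements. The integer $2g-1$ itself is a gap, so these $g$ elements all lie in $[0,2g-2]$, giving $|B|=g$.

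Finally, since $\dim H^0(X,\Omega^1)=g$, the $g$ independent holomorphic forms constitute a basis.
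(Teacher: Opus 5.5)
Your proof is correct, but it gets spanning and the count $|B|=g$ by a different route from the paper.

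\textbf{The paper's route.} It proves spanning directly. It observes that $R=k[x,y]/(F)$ is regular, hence integrally closed, hence equal to the ring of functions regular away from $P_\infty$. It also uses that $R$ is free over $k[x]$ on $1,y,\dots,y^{Q-1}$. So every holomorphic differential has the form $h\,dx$ with $h=\sum c_{ij}x^iy^j$, and the distinct-valuation argument forces every monomial occurring in $h$ to lie in $B$. The equality $|B|=g$ then follows from $\dim H^0(X,\Omega^1)=g$, with no semigroup combinatorics. In effect it reproves the gap count of $\langle Q,m\rangle$ from the known genus.

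\textbf{Your route.} You establish only holomorphy and linear independence, which avoids any description of the affine coordinate ring. Independence in a $g$-dimensional space gives $|B|\le g$ for free. You supply the missing inequality $|B|\ge g$ by importing Sylvester's classical facts: $\langle Q,m\rangle$ has $(Q-1)(m-1)/2$ gaps and Frobenius number $Qm-Q-m=2g-1$. So your argument trades a small piece of commutative algebra for a classical fact about two-generator semigroups.

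\textbf{Two small remarks.}
\begin{itemize}
\item The symmetry $s\mapsto 2g-1-s$ is not actually used in your count. You only need that all $g$ gaps lie in $[0,2g-1]$ and that $2g-1$ is itself a gap.
\item The surjectivity half of ``exactly the elements of $\langle Q,m\rangle$'' is precisely what gives $|B|\ge g$, so it deserves its one-line justification. Rewrite $aQ+bm$, with $b=cQ+j$ and $0\le j<Q$, as $(a+cm)Q+jm$.
\end{itemize}
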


\begin{proof}
Since $F_y\ne 0$ at every affine point, $x-x(P)$ is a uniformizer at each
affine point $P$, so $dx$ has neither zero nor pole there. Hence
$\operatorname{div}(dx)$ is supported at $P_\infty$, and has degree
$2g-2$, which gives the first claim.

Let $R=k[x,y]/(F)$ be the coordinate ring of the affine curve. Since the
relation $F=0$ is monic in $y$, $R$ is a free $k[x]$-module with basis
$1,y,\dots,y^{Q-1}$. By Proposition~\ref{prop:infty}(a), $R$ is regular,
hence integrally closed, so $R$ is the ring of functions on $X$ regular
away from $P_\infty$. Thus $h\,dx$ is holomorphic if and only if $h\in R$
and $v_{P_\infty}(h)\ge-(2g-2)$.

By Proposition~\ref{prop:infty}(b), $v_{P_\infty}(x^iy^j)=-(Qi+jm)$. These
values are pairwise distinct for $i\ge 0$, $0\le j\le Q-1$: if
$Qi+jm=Qi'+j'm$, then $Q\mid(j-j')m$, so $Q\mid j-j'$, so $j=j'$ and
$i=i'$. Since a valuation of a sum of terms with pairwise distinct
valuations is the minimum of their valuations, $h=\sum c_{ij}x^iy^j$
satisfies $v_{P_\infty}(h)\ge-(2g-2)$ if and only if every monomial
occurring in it does. So the stated monomials span $H^0(X,\Omega^1)$, and
they are linearly independent, so $|B|=\dim H^0(X,\Omega^1)=g$. Moreover, the
pole orders $Qi+jm$ at $P_\infty$ of the functions regular away from
$P_\infty$ are exactly the elements of the numerical semigroup
$\langle Q,m\rangle$ (write $aQ+bm$ with $b=cQ+j$, $0\le j<Q$, as
$(a+cm)Q+jm$); this is the Weierstrass semigroup at $P_\infty$.
\end{proof}

\section{The Cartier operator on the basis}\label{sec:cartier}

For integers $i\ge 0$ and $0\le j\le Q-1$ write $i_0,j_0\in\{0,\dots,p-1\}$
for the residues of $i,j$ modulo $p$, and $j=pj_1+j_0$.

\begin{lemma}\label{lem:cartier}
\begin{enumerate}
\item[(a)] If $i_0+j_0\le p-2$, then $C(x^iy^j\,dx)=0$.
\item[(b)] If $i_0+j_0\ge p-1$, put $\nu_0=i_0+j_0-(p-1)$ and
$N=i+m(j-\nu_0)$. Then $N\equiv p-1\pmod p$ and
\[
C(x^iy^j\,dx)=\epsilon\,x^{S}y^{E}\,dx,\qquad
S=\frac{N-(p-1)}{p}-mj_1,\quad E=\frac{\nu_0Q}{p}+j_1,
\]
where $\epsilon=(-1)^{j-\nu_0+j_1}\binom{j_0}{\nu_0}$ is a nonzero element
of $\mathbb{F}_p$, $S\ge 0$ and $0\le E\le Q-1$.
\end{enumerate}
\end{lemma}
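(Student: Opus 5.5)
We follow the hint from the introduction and write $y^j$ in terms of $p$-th powers. Since $Q/p$ is an integer and $y^Q=y+x^m$, we have $y=w^p-x^m$ with $w:=y^{Q/p}$. Split $y^j=y^{pj_1}\,y^{j_0}$. The factor $y^{pj_1}=(y^{j_1})^p$ is already a $p$-th power. Expand the other factor binomially:
\[
y^{j_0}=(w^p-x^m)^{j_0}=\sum_{\nu=0}^{j_0}\binom{j_0}{\nu}(-1)^{j_0-\nu}w^{p\nu}x^{m(j_0-\nu)}.
\]
Hence
\[
x^iy^j\,dx=\sum_{\nu}\binom{j_0}{\nu}(-1)^{j_0-\nu}\big(y^{j_1}w^{\nu}\big)^p\,x^{\,i+m(j_0-\nu)}\,dx .
\]
By $p^{-1}$-linearity, and since $C(f^p x^r\,dx)=f\,C(x^r dx)$ with $C(x^r\,dx)=x^{(r-(p-1))/p}dx$ when $r\equiv p-1\pmod p$ and $0$ otherwise, only the terms with $i+m(j_0-\nu)\equiv p-1\pmod p$ survive.

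Next we identify which $\nu$ survive. Lemma~\ref{lem:m}(a) gives $m\equiv 1\pmod p$, so the congruence becomes $i_0+j_0-\nu\equiv p-1\pmod p$ with $0\le\nu\le j_0\le p-1$. The quantity $i_0+j_0-\nu$ lies in $[i_0,i_0+j_0]\subset[0,2p-2]$. So the condition forces $i_0+j_0-\nu=p-1$, that is $\nu=\nu_0:=i_0+j_0-(p-1)$. This is admissible only when $0\le\nu_0\le j_0$. Here $\nu_0\le j_0$ holds because $i_0\le p-1$, and $\nu_0\ge0$ exactly in case (b). Thus case (a) gives $0$, and in case (b) exactly one term survives.

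For that term, the exponent of $x$ is $i+m(j_0-\nu_0)$. To match the statement we write $N=i+m(j-\nu_0)=i+m(j_0-\nu_0)+mpj_1$, so that
\[
\frac{i+m(j_0-\nu_0)-(p-1)}{p}=\frac{N-(p-1)}{p}-mj_1=S.
\]
Moreover $N\equiv i_0+j_0-\nu_0\equiv p-1\pmod p$. The surviving $p$-th root is $y^{j_1}w^{\nu_0}=y^{j_1+\nu_0Q/p}=y^E$. The sign is $(-1)^{j_0-\nu_0}$, which agrees with $(-1)^{j-\nu_0+j_1}$ because $j-j_0+j_1=(p+1)j_1$: this is even for odd $p$, and for $p=2$ signs are irrelevant. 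Finally $\binom{j_0}{\nu_0}\ne0$ in $\mathbb{F}_p$ since $0\le\nu_0\le j_0<p$.

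The remaining bounds are the only real content, and the main obstacle is $E\le Q-1$. We have $E=\nu_0Q/p+j_1$ with $j_1\le Q/p-1$ (since $j\le Q-1$). So $E\le Q-1$ as long as $\nu_0\le p-1$, and indeed $\nu_0\le j_0\le p-1$. Then $E\le (p-1)Q/p+Q/p-1=Q-1$. Also $S\ge0$: $S$ is an integer and $pS=i+m(j_0-\nu_0)-(p-1)$. If $j_0>\nu_0$ then $pS\ge m-(p-1)>0$. If $j_0=\nu_0$ then $i_0=p-1$, so $i\ge p-1$ and $S\ge0$. We do not need $x^Sy^E\,dx$ to be holomorphic here, since that follows from $C$ preserving $H^0(X,\Omega^1)$. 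Nothing else is delicate; the whole argument rests on $m\equiv1\pmod p$ singling out a unique $\nu$.
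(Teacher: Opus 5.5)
Your proof is correct, and it takes a cleaner route than the paper's.

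The paper expands all of $y^j=(w^p-x^m)^j$ as a sum over $0\le\nu\le j$. It uses Lucas' theorem, $\binom{j}{\nu}\equiv\binom{j_0}{\nu'}\binom{j_1}{b}$, to see that only the indices $\nu=\nu_0+pb$ survive. In case (a) nothing survives, since then $\nu'=i_0+j_0+1>j_0$. It then resums $\sum_{b=0}^{j_1}\binom{j_1}{b}(-1)^b\bigl((y+x^m)/x^m\bigr)^b=(-y/x^m)^{j_1}$ to get back a single monomial.

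That resummation just undoes the expansion of the factor $y^{pj_1}=(y^{j_1})^p$, which you never expand. By pulling this factor out as a $p$-th power at the start, you only have to expand $(w^p-x^m)^{j_0}$ with $j_0\le p-1$. Uniqueness of the surviving index then follows from the elementary range argument $i_0+j_0-\nu\in[0,2p-2]$, with no Lucas theorem and no resummation.

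The trade-off is that your sign comes out as $(-1)^{j_0-\nu_0}$ rather than in the stated form $(-1)^{j-\nu_0+j_1}$. You therefore need the parity check via $(p+1)j_1$, which you do correctly. In the paper's computation the stated sign appears directly.

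Two small points:
\begin{enumerate}
\item In $pS\ge m-(p-1)>0$ you should say why $m>p-1$, for example $m\ge q^2-q+1$ because $n\ge 3$. You can also avoid the case split entirely: $i+m(j_0-\nu_0)$ is a nonnegative integer congruent to $p-1$ modulo $p$, hence it is at least $p-1$.
\item Using $x$ as the $p$-basis element in Definition~\ref{def:cartier} requires $x\notin K^p$. This holds because $dx\neq 0$.
\end{enumerate}
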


\begin{proof}
Put $w=y^{Q/p}$. Since $y^Q=y+x^m$, we have $y=w^p-x^m$, hence
\[
x^iy^j=\sum_{\nu=0}^{j}(-1)^{j-\nu}\binom{j}{\nu}\,w^{p\nu}\,x^{N_\nu},
\qquad N_\nu:=i+m(j-\nu).
\]
Write $N_\nu=ps_\nu+r_\nu$ with $0\le r_\nu<p$. The coefficients
$c_\nu=(-1)^{j-\nu}\binom j\nu$ lie in $\mathbb{F}_p$, so $c_\nu^p=c_\nu$ and
\[
x^iy^j=\sum_{r=0}^{p-1}\Big(\sum_{\nu:\,r_\nu=r}c_\nu w^{\nu}x^{s_\nu}\Big)^p x^r .
\]
By Definition~\ref{def:cartier} (applicable because $x\notin K^p$, as
$dx\ne 0$ by Proposition~\ref{prop:basis}),
\[
C(x^iy^j\,dx)=\sum_{\nu:\,r_\nu=p-1}c_\nu\,w^{\nu}x^{s_\nu}\,dx.
\]
We first determine which $\nu$ contribute, and then use Lucas' theorem to
control their coefficients. Since $m\equiv 1\pmod p$ (Lemma~\ref{lem:m}),
$N_\nu\equiv i+j-\nu\pmod p$,
so $r_\nu=p-1$ if and only if $\nu\equiv i_0+j_0+1\pmod p$. By Lucas'
theorem, for $\nu=\nu'+pb$ with $0\le\nu'<p$ we have
$\binom j\nu\equiv\binom{j_0}{\nu'}\binom{j_1}{b}\pmod p$.

(a) If $i_0+j_0\le p-2$, then every surviving $\nu$ has $\nu'=i_0+j_0+1>j_0$,
so $\binom{j_0}{\nu'}=0$ and the sum is empty: $C(x^iy^j\,dx)=0$.

(b) If $i_0+j_0\ge p-1$, the surviving $\nu$ are those with $\nu'=\nu_0$
(note $0\le\nu_0\le j_0$ because $i_0\le p-1$). The condition $\nu\le j$
becomes $0\le b\le j_1$. For such $\nu=\nu_0+pb$ we have
$N_\nu=N-pmb$, so $s_\nu=s_0-mb$ with $s_0=(N-(p-1))/p$, and
$N\equiv i+j-\nu_0\equiv p-1\pmod p$, so $s_0$ is an integer. Moreover
$w^\nu=w^{\nu_0}(w^p)^b=y^{\nu_0Q/p}(y+x^m)^b$, and
$(-1)^{pb}=(-1)^b$ in $\mathbb{F}_p$. Therefore
\begin{align*}
C(x^iy^j\,dx)&=(-1)^{j-\nu_0}\binom{j_0}{\nu_0}x^{s_0}y^{\nu_0Q/p}
\sum_{b=0}^{j_1}\binom{j_1}{b}(-1)^b\Big(\frac{y+x^m}{x^m}\Big)^b\,dx\\
&=(-1)^{j-\nu_0}\binom{j_0}{\nu_0}x^{s_0}y^{\nu_0Q/p}
\Big(1-\frac{y+x^m}{x^m}\Big)^{j_1}dx\\
&=(-1)^{j-\nu_0+j_1}\binom{j_0}{\nu_0}\,x^{s_0-mj_1}\,y^{\nu_0Q/p+j_1}\,dx .
\end{align*}
Here $S=s_0-mj_1=s_\nu$ for $\nu=\nu_0+pj_1\le j$, and $N_\nu\ge 0$ with
$N_\nu\equiv p-1\pmod p$ forces $N_\nu\ge p-1$, so $S\ge 0$. Finally
$0\le j_1\le Q/p-1$ (as $j\le Q-1$ and $p\mid Q$) and $\nu_0\le p-1$, so
$E\le (p-1)Q/p+Q/p-1=Q-1$. The scalar $\epsilon$ is nonzero because
$0\le\nu_0\le j_0<p$.
\end{proof}

Let $B_-=\{(i,j)\in B: i_0+j_0\le p-2\}$ and
$B_+=B\setminus B_-=\{(i,j)\in B: i_0+j_0\ge p-1\}$.

\begin{proposition}\label{prop:rank}
The Cartier operator kills $x^iy^j\,dx$ for $(i,j)\in B_-$ and sends the
elements of $B_+$ to nonzero multiples of pairwise distinct basis elements.
Consequently
\[
\operatorname{Ker}(C)=\operatorname{span}\{x^iy^j\,dx:(i,j)\in B_-\},
\qquad a(X)=|B_-|=\#\{(i,j)\in B:\ i_0+j_0\le p-2\}.
\]
\end{proposition}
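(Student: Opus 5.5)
The plan is to combine Lemma~\ref{lem:cartier} with two facts: the image of a basis element is again a basis element, and the assignment $(i,j)\mapsto(S,E)$ is injective on $B_+$. The kernel and rank statements then follow by linear algebra.

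First, the claim about $B_-$ is exactly Lemma~\ref{lem:cartier}(a). For $(i,j)\in B_+$, Lemma~\ref{lem:cartier}(b) gives $C(x^iy^j\,dx)=\epsilon\,x^Sy^E\,dx$ with $\epsilon\neq 0$, $S\ge 0$ and $0\le E\le Q-1$. I must check that $(S,E)\in B$, i.e.\ $QS+Em\le 2g-2$. Here I use that $C$ maps $H^0(X,\Omega^1)$ into itself (end of Section~\ref{sec:prelim}). So $x^Sy^E\,dx$ is holomorphic. By the proof of Proposition~\ref{prop:basis}, $v_{P_\infty}(x^Sy^E)=-(QS+Em)$ and $\operatorname{div}(dx)=(2g-2)P_\infty$. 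Hence holomorphy at $P_\infty$ gives $QS+Em\le 2g-2$, so $(S,E)\in B$.

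Second, I prove injectivity by recovering $(i,j)$ from $(S,E)$.
\begin{itemize}
\item Since $0\le j_1\le Q/p-1$ and $E=\nu_0Q/p+j_1$, the pair $(\nu_0,j_1)$ is the quotient and remainder of $E$ on division by $Q/p$, so $E$ determines it.
\item Then $S$ determines $N=p(S+mj_1)+p-1$.
\item Using $j=pj_1+j_0$ and $\nu_0=i_0+j_0-(p-1)$, we get
\[
N-mpj_1=i+m(j_0-\nu_0)=i+m(p-1-i_0).
\]
So it suffices to show that $T(i):=i+m(p-1-i_0)$ is injective on the $i$-values occurring in $B$.
\item Membership in $B$ gives $Qi\le 2g-2=Qm-Q-m-1$ by Lemma~\ref{lem:m}(c). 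Hence $0\le i<m$.
\item Suppose $T(i)=T(i')$. Then $i-i'=m(i_0-i_0')$. Since $|i-i'|<m$, this forces $i_0=i_0'$, and then $i=i'$.
\item Finally $j_0=\nu_0+p-1-i_0$ and $j=pj_1+j_0$ recover $j$.
\end{itemize}
Thus distinct elements of $B_+$ map to nonzero multiples of distinct basis elements.

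Third, the linear algebra. The images $C(x^iy^j\,dx)$ for $(i,j)\in B_+$ are nonzero multiples of distinct basis vectors, so they are linearly independent. The map $C$ is $p^{-1}$-linear, so for $\omega=\sum c_{ij}x^iy^j\,dx$ we have $C(\omega)=\sum_{B_+}c_{ij}^{1/p}\epsilon_{ij}\,x^{S}y^{E}\,dx$. This vanishes iff $c_{ij}=0$ for all $(i,j)\in B_+$. Therefore $\operatorname{Ker}(C)$ is the span over $B_-$, and $a(X)=|B_-|$ by the formula $a(X)=\dim\operatorname{Ker}(C)$ from Section~\ref{sec:prelim}.

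The main obstacle is the injectivity step: the residue of $i$ mod $p$ is not visible in $N$. The bound $i<m$ coming from $2g-2<Qm$ is what resolves it.
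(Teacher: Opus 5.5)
Your proof is correct and follows the paper's argument. It uses Lemma~\ref{lem:cartier} for both cases, holomorphy of $C(x^iy^j\,dx)$ to place $(S,E)$ in $B$, recovery of $(\nu_0,j_1)$ from $E$, a size bound from $B$ to remove the remaining ambiguity, and the same $p^{-1}$-linear algebra at the end. The only difference is how the final ambiguity is removed: the paper shows that two preimages with different $j_0$ would have values of $Qi+jm$ differing by a nonzero multiple of $m(Q-1)>2g-2$, while you note directly that their $i$-values differ by a nonzero multiple of $m$, contradicting $0\le i<m$. This is the same shift seen in a single coordinate, and it gives a slightly simpler bound.
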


\begin{proof}
The first assertion about $B_-$ is Lemma~\ref{lem:cartier}(a). For
$(i,j)\in B_+$, Lemma~\ref{lem:cartier}(b) gives $C(x^iy^j\,dx)=\epsilon
x^Sy^E\,dx$. Since $C$ maps $H^0(X,\Omega^1)$ into itself (Section~\ref{sec:prelim})
and $0\le E\le Q-1$, the criterion in the proof of Proposition~\ref{prop:basis}
(a monomial $x^Sy^E\,dx$ with $0\le E\le Q-1$ is holomorphic if and only if
$QS+Em\le 2g-2$) shows $(S,E)\in B$.

We show that $(i,j)\mapsto(S,E)$ is injective on $B_+$. Suppose two pairs
have the same $(S,E)$. Since $0\le j_1\le Q/p-1$ and $0\le\nu_0\le p-1$,
the equation $E=\nu_0Q/p+j_1$ determines $\nu_0$ and $j_1$. From the
definition of $S$,
\[
pS=N-(p-1)-pmj_1=i+m(j_0-\nu_0)-(p-1),
\]
so $i=pS+(p-1)-m(j_0-\nu_0)$ and $j=pj_1+j_0$; thus $(i,j)$ is determined by
$j_0$ once $(S,E)$ is fixed. Substituting,
\[
Qi+jm=Q(pS+p-1)+mQ\nu_0+mpj_1-m(Q-1)\,j_0,
\]
so two preimages with different $j_0$ would have values of $Qi+jm$
differing by a nonzero multiple of $m(Q-1)$. But $m(Q-1)>(Q-1)(m-1)-2=2g-2$
(Lemma~\ref{lem:m}(c)), while every $(i,j)\in B$ has
$0\le Qi+jm\le 2g-2$. Hence $j_0$, and with it $(i,j)$, is unique.

Now let $\omega=\sum_{(i,j)\in B}c_{ij}\,x^iy^j\,dx$. By $p^{-1}$-linearity (recall that $k$ is perfect, so each $c_{ij}$ has a
unique $p$-th root in $k$),
$C(\omega)=\sum_{(i,j)\in B_+}c_{ij}^{1/p}\,\epsilon_{ij}x^{S}y^{E}\,dx$, a
combination of distinct basis elements with coefficients
$c_{ij}^{1/p}\epsilon_{ij}$. So $C(\omega)=0$ if and only if $c_{ij}=0$ for
all $(i,j)\in B_+$, which gives the description of $\operatorname{Ker}(C)$.
The formula for $a(X)=\dim\operatorname{Ker}(C)$ follows from
Section~\ref{sec:prelim}.
\end{proof}

\section{Counting: proof of Theorem~\ref{thm:main}}\label{sec:count}

By Proposition~\ref{prop:rank} it remains to evaluate $|B_-|$.

\begin{proof}[Proof of Theorem~\ref{thm:main}]
\emph{Step 1: reduction to a sum over $\ell$.} For $(i,j)\in B$ put
$\ell=Q-1-j$, so $0\le\ell\le Q-1$. By Lemma~\ref{lem:m}(c), the condition
$Qi+jm\le 2g-2=Qm-Q-m-1$ is equivalent to $Qi\le \ell m-Q-1$. For $\ell=0$
there is no $i\ge 0$. For $1\le\ell\le Q-1$ we have $Q\nmid \ell m$ (as
$\gcd(m,Q)=1$), so the admissible $i$ are $0\le i<c_\ell$, where
$c_\ell=\lfloor \ell m/Q\rfloor$. Since $p\mid Q$, $j\equiv-1-\ell\pmod p$,
so $j_0=p-1-\ell_0$ with $\ell_0$ the residue of $\ell$ modulo $p$. Thus
$i_0+j_0\le p-2$ if and only if $i_0<\ell_0$, and
\[
a(X)=\sum_{\ell=1}^{Q-1}\#\{0\le i<c_\ell:\ i\bmod p<\ell_0\}
=\sum_{\ell=1}^{Q-1}\Big[\ell_0\Big\lfloor\frac{c_\ell}{p}\Big\rfloor+\min(\rho_\ell,\ell_0)\Big],
\]
where $\rho_\ell$ is the residue of $c_\ell$ modulo $p$ (we used that
$\#\{0\le i<c:\ i\bmod p<r\}=r\lfloor c/p\rfloor+\min(c\bmod p,\,r)$:
each of the $\lfloor c/p\rfloor$ full blocks of $p$ consecutive integers
contains exactly $r$ such $i$, and the last partial block, of length
$c\bmod p$, contains $\min(c\bmod p,\,r)$ of them).

\emph{Step 2: the dependence on $n$.} By Lemma~\ref{lem:m}(b),
$\ell m=\ell Q\mu+\ell(1-q)$, so
\[
c_\ell=\ell\mu-h_\ell,\qquad h_\ell:=\Big\lceil\frac{\ell(q-1)}{Q}\Big\rceil .
\]
We claim $h_\ell=\lceil \ell/(q+1)\rceil$ for $1\le\ell\le Q-1$. Write
$\ell=(q+1)(h-1)+s$ with $1\le s\le q+1$ and $1\le h\le q-1$; then
\[
\frac{\ell(q-1)}{Q}=(h-1)+\frac{s(q-1)-(h-1)}{Q},\qquad
0<s(q-1)-(h-1)\le q^2-1<Q,
\]
so $h_\ell=h=\lceil\ell/(q+1)\rceil$. Since $\mu\equiv 1\pmod p$, we get
$c_\ell\equiv\ell-h_\ell\pmod p$, i.e.\ $\rho_\ell$ is the residue of
$\ell-h_\ell$, and $\lfloor c_\ell/p\rfloor=(c_\ell-\rho_\ell)/p$. Hence
\[
a(X)=\frac{\mu}{p}\,S_1+S_2,\qquad
S_1=\sum_{\ell=1}^{Q-1}\ell_0\,\ell,\qquad
S_2=\sum_{\ell=1}^{Q-1}\Big[\min(\rho_\ell,\ell_0)-\frac{\ell_0(h_\ell+\rho_\ell)}{p}\Big],
\]
where $S_1,S_2$ depend only on $q$ and not on $n$.

\emph{Step 3: computing $S_1$.} Write $\ell=pu+r$ with $0\le u\le Q/p-1$ and
$0\le r\le p-1$ (the term $\ell=0$ contributes $0$), and put $Q'=Q/p$. Using
$\sum r=p(p-1)/2$, $\sum r^2=(p-1)p(2p-1)/6$ and $\sum u=Q'(Q'-1)/2$,
\begin{align*}
S_1&=\sum_{u,r}r(pu+r)=p\cdot\frac{p(p-1)}{2}\cdot\frac{Q'(Q'-1)}{2}
+Q'\,\frac{(p-1)p(2p-1)}{6}\\
&=\frac{(p-1)\,pQ'\,(3pQ'+p-2)}{12}=\frac{(p-1)q^2(3q^2+p-2)}{12},
\end{align*}
since $pQ'=q^2$.

\emph{Step 4: computing $S_2$.} Write $\ell=(q+1)(h-1)+s+1$ with
$1\le h\le q-1$ and $0\le s\le q$. Then $h_\ell=h$, and since $p\mid q$,
$\ell-h=q(h-1)+s\equiv s\pmod p$ and $\ell\equiv h+s\pmod p$. So with
$\rho=s\bmod p$ and $\kappa=(h+\rho)\bmod p$,
\[
S_2=\sum_{h=1}^{q-1}\sum_{s=0}^{q}T,\qquad T=\min(\rho,\kappa)-\frac{\kappa(h+\rho)}{p}.
\]
For fixed $h$, as $s$ runs over $0,\dots,q-1$ each residue $\rho$ occurs $q/p$
times, and $s=q$ gives $\rho=0$. Write $h=pu+\varepsilon$ with
$0\le\varepsilon\le p-1$. For $\rho<p-\varepsilon$ we have $\kappa=\varepsilon+\rho$,
and for $\rho\ge p-\varepsilon$ we have $\kappa=\varepsilon+\rho-p$; one finds
\[
\sum_{\rho=0}^{p-1}\min(\rho,\kappa)=\tfrac{(p-\varepsilon)(p-\varepsilon-1)}{2}+\tfrac{\varepsilon(\varepsilon-1)}{2},
\]
and, writing $h+\rho=pu+\kappa+p\delta$ with $\delta=1$ if $\rho\ge p-\varepsilon$
and $0$ otherwise,
\[
\sum_{\rho=0}^{p-1}\frac{\kappa(h+\rho)}{p}
=u\frac{p(p-1)}{2}+\frac{(p-1)(2p-1)}{6}+\frac{\varepsilon(\varepsilon-1)}{2}.
\]
Therefore
\[
S_2=\sum_{h=1}^{q-1}\Theta(h),\qquad
\Theta(h)=\frac qp\Big[\tfrac{(p-\varepsilon)(p-\varepsilon-1)}{2}-u\tfrac{p(p-1)}{2}
-\tfrac{(p-1)(2p-1)}{6}\Big]-\frac{\varepsilon h}{p},
\]
where the last term is $T$ at $\rho=0$, i.e.\ the contribution of $s=q$.
Now sum over $h=0,\dots,q-1$ (so $u$ runs over $0,\dots,q/p-1$ and
$\varepsilon$ over $0,\dots,p-1$) and subtract the term $h=0$. Writing
$N'=q/p$, the pieces are
\begin{align*}
\sum_{h=0}^{q-1}\tfrac{(p-\varepsilon)(p-\varepsilon-1)}{2}&=N'\tfrac{(p+1)p(p-1)}{6}, &
\sum_{h=0}^{q-1}u&=\tfrac{pN'(N'-1)}{2},\\
\sum_{h=0}^{q-1}\tfrac{(p-1)(2p-1)}{6}&=\tfrac{q(p-1)(2p-1)}{6}, &
\sum_{h=0}^{q-1}\varepsilon h&=\tfrac{p(p-1)}{2}\cdot\tfrac{pN'(N'-1)}{2}+N'\tfrac{(p-1)p(2p-1)}{6},
\end{align*}
and $\Theta(0)=\frac qp\cdot\frac{(p-1)(p+1)}{6}$. Collecting terms gives
\[
\sum_{h=0}^{q-1}\Theta(h)=\frac{p-1}{12p}\Big[-3q^3+(p+1)q^2-(p-2)q\Big],\qquad
\Theta(0)=\frac{p-1}{12p}\cdot 2q(p+1),
\]
hence
\[
S_2=\frac{(p-1)\,q\,\big(-3q^2+(p+1)q-3p\big)}{12p}.
\]

\emph{Step 5: conclusion.} By Lemma~\ref{lem:m}(b), $\mu=(q^{n-2}+1)/(q+1)$, so
\begin{align*}
a(X)&=\frac{q^{n-2}+1}{p(q+1)}\cdot\frac{(p-1)q^2(3q^2+p-2)}{12}
+\frac{(p-1)\,q\,\big(-3q^2+(p+1)q-3p\big)}{12p}\\
&=\frac{p-1}{12p(q+1)}\Big[(3q^2+p-2)(q^n+q^2)+q(q+1)\big(-3q^2+(p+1)q-3p\big)\Big].
\end{align*}
Expanding, $(3q^2+p-2)q^2+q(q+1)(-3q^2+(p+1)q-3p)=q\big((p-2)q^2-(p+1)q-3p\big)$,
which gives the formula in Theorem~\ref{thm:main}.
\end{proof}

\section{Consequences and checks}\label{sec:consequences}

\begin{corollary}\label{cor:q2}
For $q=2$ and every odd $n\ge 3$, $a(X(2,n))=(2^n-2)/6=g/3$, where
$g=2^{n-1}-1$.
\end{corollary}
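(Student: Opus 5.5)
This is a direct specialization of Theorem~\ref{thm:main}, so the plan is to substitute $p=q=2$ (so $t=1$) into the closed formula and simplify. The prefactor becomes $\frac{p-1}{12p(q+1)}=\frac{1}{72}$. Inside the bracket:
\begin{itemize}
\item the coefficient of $q^n$ is $3q^2+p-2=12$;
\item the second term is $q\big((p-2)q^2-(p+1)q-3p\big)=2(0-6-6)=-24$.
\end{itemize}
Hence
\[
a(X(2,n))=\frac{12\cdot 2^n-24}{72}=\frac{2^n-2}{6}.
\]
For the genus, $g=(q-1)(q^n-q)/2=(2^n-2)/2=2^{n-1}-1$, which gives $a=g/3$. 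The quotient is an integer because $2^n\equiv 2\pmod 6$ for odd $n$: $2^n\equiv 2\pmod 3$ when $n$ is odd, and $2^n-2$ is even.

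As an independent sanity check, not needed for the proof, I would verify the count directly from Proposition~\ref{prop:rank} in the smallest case. For $p=2$ the condition $i_0+j_0\le p-2$ means $i$ and $j$ are both even. Take $Q=4$, $m=(2^n+1)/3$, and count pairs $(i,j)\in B$ with $j\in\{0,2\}$ and $i$ even.

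For $n=3$ we have $m=3$ and $2g-2=4$, so $B=\{(0,0),(0,1),(1,0)\}$. The pairs with $i,j$ both even are just $(0,0)$, giving $a=1=(8-2)/6$. This matches the known fact that Serre's curve has genus $3$ and $a$-number $1$.

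There is no real obstacle here; the only care needed is the arithmetic simplification of the bracket.
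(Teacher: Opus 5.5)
Your proposal is correct and takes essentially the same route as the paper, which states this corollary as a direct specialization of Theorem~\ref{thm:main} to $p=q=2$; your prefactor $1/72$, bracket $12\cdot 2^n-24$, and genus $g=(2^n-2)/2$ all check out. Your $n=3$ check is also right: $B=\{(0,0),(1,0),(0,1)\}$, $B_-=\{(0,0)\}$, and $a=1$ agrees with Table~\ref{tab:values}.
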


\begin{corollary}\label{cor:prime}
For a prime $q=p$ and odd $n\ge 3$,
$a(X(p,n))=\dfrac{(p-1)\big[(3p-2)p^{n-1}+p(p-4)\big]}{12}$.
\end{corollary}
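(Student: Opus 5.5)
The plan is to specialize Theorem~\ref{thm:main} directly; no new lattice-point count is needed. For Corollary~\ref{cor:q2}, set $q=p=2$ (so $t=1$) in the closed formula. The prefactor is
\[
\frac{p-1}{12p(q+1)}=\frac{1}{72}.
\]
For the bracket, $3q^2+p-2=12$ and $(p-2)q^2-(p+1)q-3p=-12$, so the bracket equals $12\cdot 2^n-24$. Hence
\[
a(X(2,n))=\frac{12\cdot 2^n-24}{72}=\frac{2^n-2}{6}.
\]
On the genus side, the formula $g=(q-1)(q^n-q)/2$ from Section~\ref{sec:curves} gives $g=2^{n-1}-1=(2^n-2)/2$. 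Therefore $a=g/3$.

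I would add two quick consistency checks.
\begin{itemize}
\item Integrality: $n$ odd gives $2^n\equiv 2\pmod 3$, so $(2^n-2)/6$ is an integer.
\item Serre's curve: for $n=3$ this gives $a=1=g/3$ with $g=3$, which is consistent.
\end{itemize}

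For Corollary~\ref{cor:prime}, set $q=p$. Then the bracket of Theorem~\ref{thm:main} is
\[
(3p^2+p-2)p^n+p\big((p-2)p^2-(p+1)p-3p\big)=(3p-2)(p+1)p^n+p^2(p+1)(p-4).
\]
This uses the factorizations $3p^2+p-2=(3p-2)(p+1)$ and $p^2-3p-4=(p+1)(p-4)$. The factor $p+1$ cancels the $(q+1)$ in the denominator, and a further factor of $p$ cancels the $p$ in the denominator. This leaves
\[
\frac{(p-1)\big[(3p-2)p^{n-1}+p(p-4)\big]}{12}.
\]

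The only point needing care is the algebra: expanding the bracket correctly and spotting the $(p+1)$ factorizations. As a safeguard I would check $p=2$, where Corollary~\ref{cor:prime} reduces to $(4\cdot 2^{n-1}-4)/12=(2^n-2)/6$, agreeing with Corollary~\ref{cor:q2}.
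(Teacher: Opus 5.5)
Your proposal is correct and follows essentially the same route as the paper, which states the corollary without a separate proof as a direct specialization of Theorem~\ref{thm:main}. Your factorizations $3p^2+p-2=(3p-2)(p+1)$ and $(p-2)p^2-(p+1)p-3p=p(p+1)(p-4)$ are exactly what is needed to cancel the factor $p(p+1)$ in the denominator, and your $p=2$ check against Corollary~\ref{cor:q2} is consistent.
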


Table~\ref{tab:values} lists some values of $a(X(q,n))$.

\begin{table}[h]
\centering
\begin{tabular}{rrrrr}
$q$ & $n$ & $g$ & $a(X(q,n))$ & $a/g$\\
\hline
2 & 3 & 3 & 1 & 0.3333\\
2 & 5 & 15 & 5 & 0.3333\\
2 & 7 & 63 & 21 & 0.3333\\
2 & 9 & 255 & 85 & 0.3333\\
3 & 3 & 24 & 10 & 0.4167\\
3 & 5 & 240 & 94 & 0.3917\\
3 & 7 & 2184 & 850 & 0.3892\\
4 & 3 & 90 & 25 & 0.2778\\
4 & 5 & 1530 & 409 & 0.2673\\
5 & 3 & 240 & 110 & 0.4583\\
5 & 5 & 6240 & 2710 & 0.4343\\
7 & 3 & 1008 & 476 & 0.4722\\
8 & 3 & 1764 & 454 & 0.2574\\
9 & 3 & 2880 & 990 & 0.3438
\end{tabular}
\caption{Values of $a(X(q,n))$ from Theorem~\ref{thm:main}.}
\label{tab:values}
\end{table}

\begin{remark}
We also tested Theorem~\ref{thm:main} numerically. For each of the 39 pairs
$(q,n)$ with $n\ge 3$ odd, $q\le 32$ a prime power, and genus below $10^6$
(the largest genus being $805\,200$), we computed the Cartier operator on
the basis of Proposition~\ref{prop:basis} directly from
Definition~\ref{def:cartier}, expanding each differential in the $p$-basis
$1,x,\dots,x^{p-1}$ using only the relation $y=(y^{q^2/p})^p-x^m$ and the
binomial theorem, and not the monomial formula of Lemma~\ref{lem:cartier}.
In every case the rank over $\mathbb{F}_p$ of the resulting matrix gives the
value in Theorem~\ref{thm:main}. For 173 pairs with $q\le 127$ and
$q^n\le 10^{13}$ we also compared the count $|B_-|$ of
Proposition~\ref{prop:rank} with the formula of Theorem~\ref{thm:main}. The
code is available at \url{https://github.com/mahyarshafighi/abq-a-number-code}.
These computations are not used in the proofs. In our first attempts at this
problem, the cases $q=2$, $n=3,5,7,9$ and $q=3$, $n=3$ were computed
differently, by resolving the singularity at infinity of the plane model,
determining the adjoint curves in the sense of
Gorenstein~\cite{Gorenstein}, and applying the St\"ohr--Voloch formula for
the Cartier operator~\cite{StohrVoloch}. This approach gives the same values
as Theorem~\ref{thm:main}.
\end{remark}

\end{document}